\documentclass[12pt,centertags,oneside]{amsart}
\usepackage{amsmath,amstext,amsthm,amscd,typearea}
\usepackage{amssymb}
\usepackage{a4wide}
\usepackage[mathscr]{eucal}
\usepackage{mathrsfs}
\usepackage{typearea}
\usepackage{charter}
\usepackage{pdfsync}
\usepackage{url}

\usepackage[a4paper,width=16.2cm,top=3cm,bottom=3cm]{geometry}

\numberwithin{equation}{section}




\newtheorem{theorem}{Theorem}[section]
\newtheorem{definition}[theorem]{Definition}
\newtheorem{proposition}[theorem]{Proposition}
\newtheorem{corollary}[theorem]{Corollary}
\newtheorem{lemma}[theorem]{Lemma}

\newcommand{\cali}[1]{\mathscr{#1}}

\newcommand{\dist}{\mathop{\mathrm{dist}}\nolimits}

\def\mF{\mathcal{F}}

\newcommand{\Cc}{\cali{C}}

\newcommand{\Pc}{\cali{P}}

\newcommand{\Uc}{\cali{U}}

\newcommand{\capK}{\rm cap}

\newcommand{\B}{\mathbb{B}}
\newcommand{\C}{\mathbb{C}}

\newcommand{\N}{\mathbb{N}}

\newcommand{\R}{\mathbb{R}}



\title[]{Families of  Monge-Amp\`ere measures with H\"older continuous potentials}

\author{Duc-Viet Vu}
\address{Korea institute for advanced study,
 85 Hoegiro, Dongdaemun-gu, Seoul 02455, Republic of Korea}
\email{vuviet@kias.re.kr}

\date{\today}
\begin{document}

\begin{abstract} Let $X$ be a compact K\"ahler manifold of dimension $n.$ Let $\mF$ be a family of  probability measures on $X$ whose superpotentials are of  uniformly bounded  $\Cc^\alpha$ norms for some fixed constant $\alpha \in (0,1].$  We prove that the corresponding family of solutions of the complex Monge-Amp\`ere equations $(dd^c \varphi + \omega)^n= \mu$ with $\mu \in \mF$ is H\"older continuous. 
\end{abstract}

\maketitle

\medskip

\noindent

\medskip

\noindent
{\bf Keywords:} Monge-Amp\`ere measure, Monge-Amp\`ere equation,   superpotential.   

\tableofcontents

\section{Introduction} \label{introduction}

Let $X$ be a compact K\"ahler manifold of dimension $n$ with a fixed K\"ahler form $\omega$ so normalized that $\int_X \omega^n=1.$ Let $\mu$ be a probability measure on $X.$  For  every bounded $\omega$-psh function $\varphi$ on $X$ and $1 \le j \le n,$ we put $\omega_\varphi^j:= (dd^c \varphi+ \omega)^j$ which is well-defined by \cite{Bedford_Taylor_76,Klimek}. Consider the  complex Monge-Amp\`ere equation 
\begin{align}\label{eq_MA}
\omega^n_\varphi= \mu,
\end{align}
where $\varphi$ is a bounded $\omega$-psh function on $X$ and $\int_X  \varphi \, \omega^n= 0.$ 
The equation (\ref{eq_MA}) and its variants have been extensively studied and have a  wide range of applications. Instead of giving details on the development of the research on (\ref{eq_MA}), in this short paper,  we refer  the readers to \cite{Yau1978,kolodziej05,Kolodziej_Acta,Kolodziej08holder,DemaillyHiep_etal,Dinew_Zhang_stability,Hiep_holder,Dinew_09,Phong,Blocki_stability} and the references therein for detailed information. 

In this work, we study the H\"older continuity of solutions of (\ref{eq_MA}). Recently, based on  \cite{DemaillyHiep_etal},  Dinh and Nguy\^en proved  in \cite{DinhVietanhMongeampere} that  (\ref{eq_MA}) has a unique H\"older continuous solution $\varphi_{\mu}$  if and only if $\mu$ has a H\"older continuous superpotential $\Uc_{\mu}$, see Definition \ref{def_superholder} below. Precisely, they proved that if $\Uc_{\mu}$ is H\"older continuous with H\"older exponent $\alpha \in (0,1],$ then $\varphi_{\mu} \in \Cc^{\beta}(X)$ for any $\beta \in (0, \frac{2\alpha}{n+1}),$ where $\Cc^{\beta}(X)$ denotes the set of H\"older continuous functions with H\"older exponent $\beta$ on $X.$ In this case, we call $\varphi_{\mu}$ \emph{the (Monge-Amp\`ere) potential of $\mu$}.  
In view of the last result, we would like to address the question of  the stability of the H\"older continuity of  the solution of  (\ref{eq_MA})  with respect to $\mu:$ given a family of probability measures with H\"older continuous superpotentials,  does $\varphi_\mu$ depend H\"older continuously on $\mu$ in that family? Let us be more clear in the next paragraph.

Let $\alpha \in (0,1].$ By  \cite[Pro. 4.1]{DinhVietanhMongeampere}, if $\varphi$ varies in a  bounded subset of  $\Cc^\alpha(X),$ then  $\omega_\varphi^n$ has a H\"older continuous superpotential  with uniformly bounded H\"older exponent and H\"older constant. Hence, in order to study the above stability problem, it is necessary to consider sets of probability measures having the last property.   

Now  let $\Pc_{\alpha}$ be a set of probability measures $\mu$ on $X$ such that  the superpotential  $\cali{U}_{\mu}$ of every $\mu \in \Pc_{\alpha}$ is H\"older continuous with  H\"older exponent $\alpha$ and a  H\"older constant independent of $\mu.$  Let  $\beta \in (0, \frac{2\alpha}{n+1}).$ Define $\Phi: \Pc_{\alpha} \rightarrow  \Cc^{\beta}(X)$ by sending $\mu \in \Pc_{\alpha}$ to the unique solution $\varphi_{\mu}$ of (\ref{eq_MA}).   Recall that the set of probability measures on $X$ endowed with the weak topology is a metric space with the distance $\dist$ defined as follows: for measures  $\mu,\mu',$
$$\dist(\mu,\mu'):= \sup_{\|v\|_{\Cc^{1}} \le 1} \big| \langle \mu-\mu', v \rangle \big|,$$
where $v$ is a smooth real-valued function on $X.$  The following is our main result. 

\begin{theorem} \label{th_stability}  The map  $\Phi$ is H\"older continuous with H\"older exponent   $\alpha'$ for any $0 < \alpha' < \beta(\frac{2\alpha}{n+1}- \beta)  2^{-n-1}$.   
\end{theorem}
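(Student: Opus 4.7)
My plan is to combine an $L^\infty$-stability estimate with the uniform Hölder regularity of the family $\{\varphi_\mu : \mu \in \Pc_\alpha\}$ via interpolation. Fix an auxiliary exponent $\beta_1 \in (\beta,\tfrac{2\alpha}{n+1})$; by the Dinh-Nguy\^en result recalled in the introduction, applied uniformly to every $\mu \in \Pc_\alpha$, the set $\{\varphi_\mu : \mu \in \Pc_\alpha\}$ is bounded in $\Cc^{\beta_1}(X)$. The standard interpolation inequality
$$ [f]_{\Cc^\beta} \le C \|f\|_\infty^{\,1-\beta/\beta_1} [f]_{\Cc^{\beta_1}}^{\,\beta/\beta_1} $$
then reduces the theorem to an $L^\infty$-bound of the form $\|\varphi_\mu - \varphi_{\mu'}\|_\infty \le C \dist(\mu,\mu')^{\eta}$ for a suitable $\eta > 0$; letting $\beta_1 \nearrow \tfrac{2\alpha}{n+1}$ at the end will recover the exponent of the statement, with the factor $2^{-n-1}$ arising at the $L^\infty$-stability step.

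For the $L^\infty$-estimate, my starting observation is that the Hölder regularity of $\varphi_\mu - \varphi_{\mu'}$ itself allows one to test $\mu - \mu'$ against it: a standard mollification argument, writing $f = f^\epsilon + (f - f^\epsilon)$ and balancing $\|f^\epsilon\|_{\Cc^1}\dist(\mu,\mu')$ against $\|f - f^\epsilon\|_\infty$, shows that for every $f$ in a bounded subset of $\Cc^\beta(X)$,
$$ |\langle \mu - \mu', f\rangle| \le C \dist(\mu,\mu')^{\beta}. $$
Applied with $f = \varphi_\mu - \varphi_{\mu'}$, together with the telescoping identity $\omega_{\varphi_\mu}^n - \omega_{\varphi_{\mu'}}^n = dd^c(\varphi_\mu - \varphi_{\mu'})\wedge T$ where $T := \sum_{k=0}^{n-1}\omega_{\varphi_\mu}^k\wedge \omega_{\varphi_{\mu'}}^{n-1-k}$, integration by parts yields the Dirichlet-type energy estimate
$$ \int d(\varphi_\mu - \varphi_{\mu'})\wedge d^c(\varphi_\mu - \varphi_{\mu'})\wedge T \le C \dist(\mu,\mu')^{\beta}. $$

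The heart of the proof — and what I expect to be the main obstacle — is upgrading this integral bound to a pointwise $L^\infty$ bound. I would attempt this via a Kolodziej-type iteration on the Monge-Amp\`ere capacity $\capacity\{\varphi_\mu - \varphi_{\mu'} > t\}$, using the energy estimate above together with the quantitative Hölder continuity of the superpotentials $\Uc_\mu, \Uc_{\mu'}$ as a surrogate for the usual $L^p$-density hypothesis on the measures, and then iterating a comparison inequality to reach a $\sup$-bound (the opposite bound follows by symmetry in $\mu, \mu'$). Each step of the iteration inevitably loses roughly a factor of $\tfrac{1}{2}$ in the exponent and takes on the order of $n+1$ steps to close, which is precisely what will account for the dimensional factor $2^{-n-1}$ in the Hölder exponent $\alpha'$ of the theorem.
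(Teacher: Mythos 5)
Your overall architecture (uniform $\Cc^{\beta_1}$ bound, interpolation, reduction to an $L^\infty$ estimate, testing $\mu-\mu'$ against $\varphi_\mu-\varphi_{\mu'}$ via $\dist_\beta\lesssim\dist^\beta$) coincides with the paper's, and the energy identity you write down is the correct starting point. The genuine gap is the step you yourself flag as the heart of the matter: the passage from the energy bound to the $L^\infty$ bound is not carried out, and the mechanism you propose for it does not match where the losses actually occur, so the exponent bookkeeping cannot be checked. Two concrete problems. First, your energy estimate is with respect to the current $T=\sum_k\omega_{\varphi_\mu}^k\wedge\omega_{\varphi_{\mu'}}^{n-1-k}$, which does not dominate $\omega^{n-1}$, so it gives no direct control of $\int df\wedge d^c f\wedge\omega^{n-1}$ and hence no $L^2$ or $L^1$ information by itself. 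The paper inserts B{\l}ocki's stability inequality here, $\int df\wedge d^cf\wedge\omega^{n-1}\le C\big(\int f(\omega^n_{\varphi_2}-\omega^n_{\varphi_1})\big)^{2^{1-n}}$, followed by the Poincar\'e inequality, to obtain $\|\varphi_1-\varphi_2\|_{L^1}\lesssim\dist(\mu_1,\mu_2)^{\beta 2^{-n}}$ (Lemma \ref{le_uocluongL1}); this is where the factor $2^{-n}$ in the final exponent comes from, not from an iteration. Your outline omits this $L^1$ step entirely, yet it is precisely the quantity that feeds the capacity argument.

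Second, the upgrade from $L^1$ to $L^\infty$ in the paper is not a Ko{\l}odziej-type iteration losing a factor of $1/2$ at each of roughly $n+1$ stages; it is a one-shot contradiction argument. With $E_\epsilon=\{\varphi_1+\epsilon<\varphi_2\}$, Chebyshev plus the H\"older continuity of $\Uc_{\mu_1}$ (inequality (\ref{le_ine_trunhau})) gives $\mu_1(E_\epsilon)\lesssim\epsilon^{-1}N$ with $N=\max\{\|\varphi_1-\varphi_2\|_{L^1}^\alpha,\|\varphi_1-\varphi_2\|_{L^1}\}$; the comparison principle applied to the competitor $\epsilon\varphi+(1-\epsilon)\varphi_2$ converts this into $\capK_\omega(E_{2\epsilon(\tilde C+1)})\lesssim\epsilon^{-n-1}N$; and the K-moderation of $\omega_{\varphi_1}^n$ (Lemma \ref{le_cap}) forces $\capK_\omega\gtrsim\epsilon^{\tilde\delta}$ on any nonempty such set, so choosing $\epsilon\sim N^{1/(n+1+\tilde\delta)}$ shows $E_{2\epsilon(\tilde C+2)}=\emptyset$. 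The $(n+1)^{-1}$ in the exponent thus comes from the single factor $\epsilon^n$ in $\omega^n_{\epsilon\varphi+(1-\epsilon)\varphi_2}\ge\epsilon^n\omega_\varphi^n$ together with one Chebyshev factor $\epsilon^{-1}$, while the $2^{-n-1}$ of the theorem is the $2^{-n}$ from B{\l}ocki's inequality combined with the factor coming from the interpolation limit $\delta\to\frac{2\alpha}{n+1}-\beta$. As written, your proposal leaves the decisive estimate unproved and attributes the dimensional constant to the wrong source, so one cannot verify that your route closes with the exponent claimed in the statement.
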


Equivalently, the last theorem says that there is a constant $C$ (depending on $\beta, \alpha'$) such that 
\begin{align} \label{ine_holderphimu}
\| \varphi_{\mu_1} -  \varphi_{\mu_2}\|_{\Cc^{\beta}} \le C [\dist(\mu_1, \mu_2)]^{\alpha'},
\end{align}
for every $\mu_1,\mu_2 \in \Pc_{\alpha}.$  Consequently,  if $\{\mu_k\}_{k\in \N} \in \Pc_{\alpha}$ converges weakly to $\mu \in  \Pc_{\alpha},$ then the associated solution $\varphi_{\mu_k}$ converges to $\varphi_{\mu}$ in $\Cc^{\beta}(X).$  An interesting feature in the last assertion is that  $\mu_k$ and $\mu$ can be singular to each other for every $k$. An imitation of Ko{\l}odziej's arguments in \cite{Kolodziej_2003} only gives an estimate of type (\ref{ine_holderphimu}) but with $\dist(\mu_1,\mu_2)$ replaced by the mass norm $\|\mu_1- \mu_2\|$ of $(\mu_1- \mu_2)$.  A such estimate is not useful when $\mu_1,\mu_2$ are singular to each other. For example as in the situation described in Corollary \ref{cor_MApotenhol} below, the supports of measures $\mu_1,\mu_2$  in question are disjoint, hence  $\|\mu_1 - \mu_2\|=2$ in this case. 

We give now an application of our main result. Recall that a  real submanifold of $X$ is said to be \emph{Cauchy-Riemann generic}   if the real tangent space at any point of it isn't contained in a complex hypersurface of the real tangent space at that point of $X.$ By \cite{Vu_MA}, the restriction of a smooth volume form  of an immersed (Cauchy-Riemann) generic  submanifold $Y$ of $X$ to a compact subset $K$ of $Y$ has a H\"older continuous superpotential. It is also clear from the arguments there that if the compact $K$ depends smoothly on a parameter $\tau$ then the H\"older exponent and H\"older constant of the superpotential can be chosen to be fixed numbers for every $\tau,$ see Proposition \ref{pro_uniformrealsub} below. Precisely,  let $M$ be a compact real manifold and  $Y$ a real Riemannian manifold. Assume that there is  a  smooth map  $\Psi: Y \times M \rightarrow X$ such that  $\Psi_\tau:= \Psi|_{Y \times \{\tau\}}: Y \rightarrow X$ is an embedding into $X$ such that $Y_\tau:= \Psi_\tau(Y)$ is a generic submanifold $Y_{\tau}$ for every $\tau \in M.$ Then   $\{Y_{\tau}\}_{\tau \in M}$ is a smooth family of generic submanifolds of $X.$ Note that using local charts of $X,$ we see that  such family exists abundantly. With this setting, we get the following nice geometric result.

\begin{corollary} \label{cor_MApotenhol} Let $K$ be a compact subset of $Y.$ For $\tau \in M,$ define $\mu_\tau$ to be the pushforward measure of the volume form of $Y$ on $K$ under $\Psi_\tau.$ Then the family of the Monge-Amp\`ere potential $\varphi_{\mu_\tau}$ of $\mu_\tau$ is H\"older continuous in $\tau.$
\end{corollary}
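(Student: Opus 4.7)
The plan is to factor the map $\tau \mapsto \varphi_{\mu_\tau}$ through the space of probability measures with the distance $\dist$, and then combine the stability statement in Theorem \ref{th_stability} with the uniform superpotential regularity coming from Proposition \ref{pro_uniformrealsub}. Concretely, I would write
$$M \xrightarrow{\ \tau \mapsto \mu_\tau\ } (\Pc_\alpha, \dist) \xrightarrow{\ \Phi\ } \Cc^\beta(X),$$
verify that the first arrow is Lipschitz and that the second is Hölder continuous, and then compose.

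For the first arrow, I would invoke Proposition \ref{pro_uniformrealsub} to guarantee that the superpotentials $\Uc_{\mu_\tau}$ are all Hölder continuous with one common exponent $\alpha\in (0,1]$ and one common Hölder constant; this is exactly what is needed so that the whole family $\{\mu_\tau\}_{\tau \in M}$ sits inside some $\Pc_\alpha$. The Lipschitz continuity in the $\dist$-distance is then a direct change-of-variables calculation: for any smooth $v$ on $X$ with $\|v\|_{\Cc^1}\leq 1$,
$$\langle \mu_{\tau_1}-\mu_{\tau_2},v\rangle \;=\; \int_K \bigl[v(\Psi_{\tau_1}(y))-v(\Psi_{\tau_2}(y))\bigr]\, d\mathrm{vol}_Y(y),$$
and since $\Psi$ is smooth and $K\subset Y$, $M$ are compact, one has $d_X(\Psi_{\tau_1}(y),\Psi_{\tau_2}(y))\leq C\, d_M(\tau_1,\tau_2)$ uniformly in $y\in K$. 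Coupled with $\|v\|_{\Cc^1}\leq 1$, integration over $K$ yields $\dist(\mu_{\tau_1},\mu_{\tau_2})\leq C'\, d_M(\tau_1,\tau_2)$.

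Now I would fix any $\beta \in (0,\tfrac{2\alpha}{n+1})$ and any $\alpha' \in \bigl(0,\beta(\tfrac{2\alpha}{n+1}-\beta)\,2^{-n-1}\bigr)$ and apply Theorem \ref{th_stability} to the family $\{\mu_\tau\}\subset \Pc_\alpha$. This produces a constant $C''$ with
$$\|\varphi_{\mu_{\tau_1}}-\varphi_{\mu_{\tau_2}}\|_{\Cc^\beta} \;\leq\; C''\,[\dist(\mu_{\tau_1},\mu_{\tau_2})]^{\alpha'} \;\leq\; C''(C')^{\alpha'}\, d_M(\tau_1,\tau_2)^{\alpha'},$$
which is exactly the claimed Hölder continuity of $\tau \mapsto \varphi_{\mu_\tau}$.

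The substantive input is really packaged into the two earlier results, so I do not expect a genuine obstacle here; the only points worth checking carefully are that Proposition \ref{pro_uniformrealsub} does give \emph{uniform} Hölder constants on the whole parameter family (not merely pointwise in $\tau$), and that the use of $\|v\|_{\Cc^1}\leq 1$ in the definition of $\dist$ (as opposed to a weaker $\Cc^0$ norm) is indeed enough to absorb the smooth $\tau$-variation of $\Psi_\tau$ into a Lipschitz bound. Both are routine once the setup is in place.
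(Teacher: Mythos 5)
Your proposal is correct and is exactly the argument the paper intends: Proposition \ref{pro_uniformrealsub} places the whole family $\{\mu_\tau\}$ in a single class $\Pc_\alpha$, the smoothness of $\Psi$ together with $\|v\|_{\Cc^1}\le 1$ gives the Lipschitz bound $\dist(\mu_{\tau_1},\mu_{\tau_2})\lesssim d_M(\tau_1,\tau_2)$, and Theorem \ref{th_stability} then yields the H\"older continuity of $\tau\mapsto\varphi_{\mu_\tau}$. Your change-of-variables verification of the Lipschitz step is a detail the paper leaves implicit, but it is the right one.
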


Note that as in Theorem \ref{th_stability}, we can give an explicit H\"older exponent in Corollary  \ref{cor_MApotenhol}.  In the next section, we will give a proof of Theorem \ref{th_stability}. 

\medskip
\noindent
{\bf Acknowledgement.} The author would like to thank Ngoc Cuong Nguyen for fruitful discussions.


\section{Proof of Theorem \ref{th_stability}} \label{sec_Phiholder}

Let $\mathcal{P}_0$ be the set of $\omega$-psh functions $\varphi$ on $X$ such that $\int_X  \varphi \, \omega^n= 0.$ We define the distance  $\dist_{L^1}$ on $\mathcal{P}_0$ by putting 
$$\dist_{L^1}(\varphi_1,\varphi_2):= \int_X |\varphi_1 - \varphi_2| \, \omega^n,$$ 
for every $\varphi_1, \varphi_2 \in \mathcal{P}_0.$ 

\begin{definition} \label{def_superholder} The superpotential of a probability measure $\mu$ (of mean $0$) is the function $\cali{U}:  \mathcal{P}_0 \rightarrow \R$ given by $\cali{U}(\varphi):= \int_X \varphi d\mu.$ We say that $\cali{U}$ is H\"older continuous with H\"older exponent $\alpha \in (0,1]$ if it is so with respect to the distance $\dist_{L^1}.$ The $\Cc^\alpha$-norm of $\Uc$ is defined as usual.     
\end{definition}

By \cite[Le. 3.3]{DinhVietanhMongeampere}, that $\Uc$ is H\"older continuous with H\"older exponent $\alpha \in (0,1]$ is equivalent to having
\begin{align} \label{le_ine_trunhau}
\int_X|\varphi_1 - \varphi_2| d\mu \le C \max \big \{ \|\varphi_1 -\varphi_2\|_{L^1(X)}^{\alpha},\|\varphi_1 -\varphi_2\|_{L^1(X)} \big \},
\end{align} 
for some constant $C$ independent of $\varphi_1, \varphi_2.$  
By the arguments in \cite{DinhVietanhMongeampere}, we immediately get the following.

\begin{lemma} \label{le_uniformbound} Assume that  the superpotential $\cali{U}$ of  a probability measure $\mu$ on $X$ is H\"older continuous with H\"older exponent $\alpha$ and H\"older constant $C.$ Let  $\beta \in (0, \frac{2\alpha}{n+1}).$  Then the unique solution $\varphi_\mu$ of (\ref{eq_MA}) with $\int_X  \varphi_\mu \, \omega^n= 0$ is H\"older continuous with H\"older exponent $\beta$ and H\"older constant $\tilde{C}$ depending only on $\alpha,\beta,C$ and $X.$ In particular, $\varphi_\mu$ is bounded by $\tilde{C}$ independent of $\mu \in \Pc_\alpha.$
\end{lemma}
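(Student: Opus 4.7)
The plan is to retrace the Dinh--Nguyen argument from \cite{DinhVietanhMongeampere} and keep careful track of the dependence of all constants: since every $\mu \in \Pc_\alpha$ satisfies \eqref{le_ine_trunhau} with the \emph{same} exponent $\alpha$ and the \emph{same} constant $C$, every estimate produced by that argument will be uniform across $\Pc_\alpha$, which is exactly the content of the lemma.

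More concretely, I would proceed in two stages. The first stage is a uniform $L^\infty$ bound $\|\varphi_\mu\|_{L^\infty} \le M$ with $M = M(\alpha, C, X)$. Applying \eqref{le_ine_trunhau} to $\varphi_1 = \varphi$ and $\varphi_2 = 0$ for any bounded $\omega$-psh function $\varphi$ of zero mean gives the moderate estimate
$$\int_X |\varphi|\, d\mu \le C \max\bigl\{\|\varphi\|_{L^1(\omega^n)}^{\alpha},\ \|\varphi\|_{L^1(\omega^n)}\bigr\},$$
and, via Chebyshev, this controls the $\mu$-mass of the sublevel sets $\{\varphi \le -t\}$ by a function of $t$ depending only on $\alpha, C$ and $X$. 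This moderate-type control is precisely the input needed to run Kolodziej's $L^\infty$ capacity iteration with uniform constants, producing the desired bound $M$.

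The second stage is the H\"older estimate itself, carried out via Demailly's regularization. I would take smooth approximants $\varphi_{\mu,\epsilon}$ of $\varphi_\mu$ that are $\omega$-psh up to an $O(\epsilon^2)$ perturbation of the K\"ahler form and satisfy the $L^1$ estimate $\|\varphi_{\mu,\epsilon} - \varphi_\mu\|_{L^1} \le C' \epsilon^2 (1 + M)$. Inserting this into \eqref{le_ine_trunhau} yields
$$\int_X (\varphi_{\mu,\epsilon} - \varphi_\mu)\, d\mu \le C'' \epsilon^{2\alpha},$$
and the Dinh--Nguyen comparison-principle argument then upgrades this to a uniform $L^\infty$ estimate $\|\varphi_{\mu,\epsilon} - \varphi_\mu\|_{L^\infty} \le \tilde C \epsilon^{\beta}$ for any $\beta < 2\alpha/(n+1)$. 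The standard fact that such a Demailly-type approximation estimate implies H\"older continuity of $\varphi_\mu$ with exponent $\beta$ then delivers the conclusion, with a H\"older constant depending only on $(\alpha, \beta, C, X)$.

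The only real obstacle is the bookkeeping in the first stage: one must verify that the moderate constant extracted from \eqref{le_ine_trunhau} and fed into Kolodziej's iteration really does not depend on the particular $\mu \in \Pc_\alpha$. Once this uniformity is in hand, the uniform boundedness of $\varphi_\mu$ asserted in the final clause is exactly the output of stage one, and the H\"older bound is the output of stage two.
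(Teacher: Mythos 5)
Your proposal is correct and follows essentially the same route as the paper, which simply observes that the Dinh--Nguy\^en argument from \cite{DinhVietanhMongeampere} (uniform $L^\infty$ bound via Ko{\l}odziej's capacity iteration, then Demailly regularization combined with the comparison principle to reach any exponent $\beta < \frac{2\alpha}{n+1}$) depends only on the H\"older exponent $\alpha$ and constant $C$ of the superpotential, hence is uniform over $\Pc_\alpha$. Your more detailed bookkeeping of where those constants enter is exactly the verification the paper leaves implicit.
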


Let $K$ be a Borel subset of $X.$ The \emph{capacity} of $K$ is given by 
$$\capK_\omega(K):= \sup \big\{ \int_K \omega_\varphi^n: 0 \le \varphi \le 1, \varphi \, \text{ $\omega$-psh} \big \}.$$
The above notion is due to Ko{\l}odziej as an analogue to the capacity given by Bedford and Taylor in the  local setting. As introduced in \cite{DinhVietanhMongeampere}, a positive measure $\mu$ is said to be \emph{K-moderate} if  there are positive constants $A$ and $\delta_0$ for which 
\begin{align}\label{ine_capdominate}
\int_K \omega_{\varphi_1}^n \le A  e^{- [\capK_\omega(K)]^{-\delta_0}}.  
\end{align}
for every Borel subset $K$ of $X.$  Recall that if $\mu$ has a H\"older continuous superpotential with H\"older exponent $\alpha$ and H\"older constant $C,$ then $\mu$ is K-moderate by \cite[Pro. 2.4]{DinhVietanhMongeampere}. Moreover,  the constants $A,\delta_0$ in (\ref{ine_capdominate})  depend only on $\alpha, C$ and $X.$    The following result is crucial for our later proof.  

\begin{lemma} \label{le_cap} Let $\varphi_1, \varphi_2$ be bounded $\omega$-psh functions on $X.$ Let $s$ be a real number. Assume that the set $\{\varphi_1 - s < \varphi_2\}$ is nonempty and $\omega_{\varphi_1}^n$ is K-moderate. Let $\delta$ be a positive number in $(0,1).$ Then there exists a constant $A'$ depending only on $A, \delta_0,\delta$ and $n$ such that for any $\epsilon \in (0,1)$ we have 
$$\capK_\omega \big(\{\varphi_1 - s- \epsilon < \varphi_2\}\big) \ge  A' (1+ \|\varphi_2\|_{L^{\infty}})^{-\delta} \epsilon^\delta.$$
\end{lemma}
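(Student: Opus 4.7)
The plan is to exhibit, for any $\epsilon\in(0,1)$, an $\omega$-psh function $\chi\colon X\to[0,1]$ whose Monge--Amp\`ere mass on $U_\epsilon$ is at least the desired lower bound; such a $\chi$ is then an admissible competitor in the supremum defining $\capK_\omega(U_\epsilon)$.

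First I would reduce to $s=0$ by absorbing the shift into $\varphi_1$, and write $V:=U_\epsilon=\{\varphi_1-\epsilon<\varphi_2\}$ and $M:=\|\varphi_2\|_{L^\infty}$. For a small parameter $t\in(0,1)$ to be optimised later, introduce the convex combination
\[
u_t:=(1-t)(\varphi_1-\epsilon)+t\varphi_2,
\]
which is $\omega$-psh and satisfies $u_t-(\varphi_1-\epsilon)=t(\varphi_2-\varphi_1+\epsilon)$; in particular $\{u_t>\varphi_1-\epsilon\}=V$ and the excess is strictly positive on $V$. After an affine rescaling with a factor of order $1/(1+M)$ (using that both summands are $\omega$-psh and bounded, and that $\chi=(u_t-\text{const})/(cM+1)$ remains $\omega$-psh provided $cM+1\ge 1$), one obtains the required competitor $\chi\in\mathrm{PSH}(X,\omega)\cap[0,1]$. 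Expanding $\omega_\chi^n$ by multilinearity produces mixed terms $\omega_{\varphi_1}^k\wedge\omega_{\varphi_2}^{n-k}$ weighted by powers of $t/(1+M)$ and $1-t/(1+M)$.

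The core of the argument is to bound $\int_V\omega_\chi^n$ from below by combining three ingredients: (a) the Bedford--Taylor comparison principle applied to the pair $(\varphi_1-\epsilon,u_t)$, which yields $\int_V\omega_{u_t}^n\le\int_V\omega_{\varphi_1}^n$ and, through the analogous inequality for intermediate Hessian powers, controls each mixed term; (b) the K-moderate hypothesis used in its contrapositive form
\[
\capK_\omega(E)\ \ge\ \bigl(\log(A/\mu(E))\bigr)^{-1/\delta_0},\qquad \mu:=\omega_{\varphi_1}^n,
\]
which transforms a lower bound on $\mu(V)$ into a lower bound on $\capK_\omega(V)$; (c) quasi-continuity of $\omega$-psh functions, invoked to handle the degenerate case $\mu(V)=0$: outside an exceptional open set of capacity at most $\eta$ (which we may take comparable to the desired bound) the functions $\varphi_1,\varphi_2$ are continuous, so that $V$ becomes open there and its Lebesgue measure is controlled below via a geometric argument exploiting the gap $\epsilon$, which then transfers back to a $\mu$-mass bound through the K-moderate property applied in the other direction.

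The main obstacle is matching the polynomial rate $\epsilon^\delta$ uniformly across the two regimes. A naive application of the K-moderate inequality only gives the logarithmic bound $(\log(1/\epsilon))^{-1/\delta_0}$, which is very strong when $\mu(V)$ is not too small but useless when $\mu(V)$ is tiny; bridging these regimes requires optimising $t$ as a small power of $\epsilon/(1+M)$ and carefully tracking how the normalisation constant $1/(1+M)$ propagates into each mixed Monge--Amp\`ere term. The freedom to take any $\delta\in(0,1)$ (rather than $\delta$ close to $1$) is precisely the price paid for this optimisation, and the source of the $(1+M)^{-\delta}$ dependence in the final estimate.
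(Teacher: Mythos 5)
Your proposal does not follow the paper's route and, as outlined, it has a genuine gap. The paper's proof is short: from the K-moderateness (\ref{ine_capdominate}) one deduces the polynomial domination $\int_K\omega_{\varphi_1}^n\le A_1[\capK_\omega(K)]^{n^2/\delta}$ for every Borel set $K$ (since $e^{-x^{-\delta_0}}\le C_m x^m$ for every $m>0$ as $x\to 0$), and then the entire content of the lemma is delegated to Ko{\l}odziej's capacity estimate \cite[Le.~2.2]{Kolodziej_2003} applied with $h(t)=t^{n^2\delta^{-1}}$. That lemma is an \emph{iterated} comparison-principle argument over a decreasing family of sublevel sets, and the convergence of $\int^{\infty} t^{-1}h^{-1/n}(t)\,dt$ is exactly what converts the polynomial domination into the lower bound $\capK_\omega(\{\varphi_1-s-\epsilon<\varphi_2\})\gtrsim(1+\|\varphi_2\|_{L^\infty})^{-\delta}\epsilon^{\delta}$. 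Your plan replaces this iteration with a single competitor $\chi$ built from $u_t=(1-t)(\varphi_1-\epsilon)+t\varphi_2$, and none of your three ingredients can close the argument.

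Concretely: (a) the comparison principle applied to the pair $(\varphi_1-\epsilon,u_t)$ gives an \emph{upper} bound for $\int\omega_{u_t}^n$ by $\int\omega_{\varphi_1}^n$ over the relevant set; this is precisely the mechanism the paper uses to prove the capacity \emph{upper} bound (\ref{ine_boundcap}) in the proof of Theorem \ref{th_stability}, and it points in the wrong direction for Lemma \ref{le_cap}. (b) The ``contrapositive'' inequality $\capK_\omega(E)\ge\bigl(\log(A/\mu(E))\bigr)^{-1/\delta_0}$ is useless unless you can bound $\mu(V)=\int_V\omega_{\varphi_1}^n$ from below, and no such bound is available: the hypothesis only says $\{\varphi_1-s<\varphi_2\}\neq\emptyset$, and $\omega_{\varphi_1}^n$ may put no mass at all on $V$; for the same reason the lower bounds $\int_V\omega_\chi^n\ge\lambda^n(1-t)^n\,\omega_{\varphi_1}^n(V)$ and $\ge(\lambda t)^n\,\omega_{\varphi_2}^n(V)$ coming from multilinearity can both vanish. (c) The claim that quasi-continuity plus ``a geometric argument exploiting the gap $\epsilon$'' yields a lower bound on the Lebesgue measure of $V$ is false: take $\varphi_1$ with a logarithmic pole and $\varphi_2$ bounded; then $V$ is contained in a ball of radius about $e^{-c/\epsilon}$, whose volume is super-exponentially small in $1/\epsilon$ even though its capacity is of order $\epsilon^n$ --- which is precisely why the statement is about capacity and cannot be routed through volume. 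The missing idea is Ko{\l}odziej's iteration; the efficient fix is to quote \cite[Le.~2.2]{Kolodziej_2003} as the paper does, or to reproduce its proof.
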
  

\proof  By (\ref{ine_capdominate}), we have
\begin{align}\label{ine_capdominate2}
\int_K \omega_{\varphi_1}^n \le A_1  [\capK_\omega(K)]^{-n^2 \delta^{-1}},  
\end{align}
for some constant $A_1$ depending only on $A,n,\delta_0,\delta.$ Define $h(t):= t^{n^2 \delta^{-1}}$ for positive real numbers $t.$  Put $c_\epsilon:= \capK_\omega \big(\{\varphi_1 - s- \epsilon < \varphi_2\}\big).$  Applying now \cite[Le. 2.2]{Kolodziej_2003} to $h(t)$ and $\varphi_1, \varphi_2$ gives 
$$\int_{c_\epsilon^{-1/n}}^{\infty} t^{-1} h^{-1/n}(t) dt + h^{-1/n}(c_\epsilon^{-1/n}) \gtrsim (1+ \|\varphi_2\|_{L^{\infty}})^{-1} \epsilon.$$
Then the desired inequality follows easily.  The proof is finished.
\endproof

\begin{lemma} \label{le_uocluongL1} Let $\mu_1, \mu_2 \in \Pc_\alpha$ and $\varphi_1, \varphi_2$ H\"older continuous solutions of  (\ref{eq_MA}) for $\mu_1,\mu_2$ respectively.  Let  $\beta \in (0, \frac{2\alpha}{n+1}).$ Then we have 
\begin{align} \label{ine_chuanL1}
\| \varphi_1 - \varphi_2\|_{L^1(X)} \le  C \dist(\mu_1, \mu_2)^{\beta 2^{-n}},
\end{align}
for some constant $C$ independent of $\mu_1,\mu_2.$
\end{lemma}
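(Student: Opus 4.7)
Set $\psi := \varphi_1 - \varphi_2$. By Lemma \ref{le_uniformbound}, $\psi$ lies in a bounded subset of $\Cc^\beta(X)$ and in $L^\infty(X)$, uniformly in $\mu_1, \mu_2 \in \Pc_\alpha$. The starting point is the telescoping identity
\[
\mu_1 - \mu_2 \;=\; \omega_{\varphi_1}^n - \omega_{\varphi_2}^n \;=\; dd^c\psi \wedge T, \qquad T := \sum_{k=0}^{n-1} \omega_{\varphi_1}^k \wedge \omega_{\varphi_2}^{n-1-k},
\]
which, paired against $\psi$ and integrated by parts, gives $\int d\psi \wedge d^c\psi \wedge T = -\int \psi \, d(\mu_1 - \mu_2)$. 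A convolution--regularization argument controls the right-hand side: writing $\psi_\epsilon$ for the $\epsilon$-smoothing of $\psi$, one has $\|\psi - \psi_\epsilon\|_\infty \lesssim \epsilon^\beta$ and $\|\psi_\epsilon\|_{\Cc^1} \lesssim \epsilon^{\beta - 1}$, so $|\langle \psi, \mu_1 - \mu_2 \rangle| \lesssim \epsilon^\beta + \epsilon^{\beta - 1} \dist(\mu_1,\mu_2)$; optimizing $\epsilon = \dist(\mu_1,\mu_2)$ yields the bound $Cd^\beta$ with $d := \dist(\mu_1, \mu_2)$. By positivity of each summand of $T$, this gives $\int d\psi \wedge d^c\psi \wedge \omega_{\varphi_1}^a \wedge \omega_{\varphi_2}^b \le Cd^\beta$ for every $a+b = n-1$.

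The core of the argument is an iterative scheme in which one trades a factor $\omega_{\varphi_1}$ for a factor $\omega$, each swap losing a square root. Define
\[
f(a,b,c) \;:=\; \int d\psi \wedge d^c\psi \wedge \omega_{\varphi_1}^a \wedge \omega_{\varphi_2}^b \wedge \omega^c, \qquad a+b+c = n-1.
\]
Using $\omega = \omega_{\varphi_1} - dd^c\varphi_1$ gives $f(a-1, b, c+1) = f(a, b, c) - G$, where
\[
G \;=\; \int d\psi \wedge d^c\psi \wedge dd^c\varphi_1 \wedge R, \qquad R := \omega_{\varphi_1}^{a-1} \wedge \omega_{\varphi_2}^b \wedge \omega^c.
\]
Three successive integrations by parts (all valid since $\varphi_1, \psi$ are bounded and the ``base'' currents $R$, $dd^c\psi \wedge R$ are closed) rewrite this as $G = \int d\varphi_1 \wedge d^c\psi \wedge dd^c\psi \wedge R$. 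Splitting $dd^c\psi = \omega_{\varphi_1} - \omega_{\varphi_2}$ and applying the Bedford--Taylor Cauchy--Schwarz inequality to each positive-current piece, together with the uniform bound $\int d\varphi_1 \wedge d^c\varphi_1 \wedge \omega_{\varphi_i} \wedge R \le C$ (one more integration by parts using that $\varphi_1$ is bounded and the relevant cohomology class is fixed), yields
\[
|G|^2 \;\le\; C\bigl[\,f(a,b,c) + f(a-1, b+1, c)\,\bigr].
\]

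Combining this with $f(a-1, b, c+1) \le f(a,b,c) + |G|$ and inducting on $c$ starting from the base case $c = 0$ of the previous paragraph, we obtain $f(a, b, c) \le C_n\, d^{\beta/2^c}$ for every $a+b+c = n-1$; in particular $\int d\psi \wedge d^c\psi \wedge \omega^{n-1} \le C\, d^{\beta/2^{n-1}}$. Since $\psi$ has zero $\omega^n$-mean, the Poincar\'e inequality gives $\|\psi\|_{L^2(X)}^2 \le C \int d\psi \wedge d^c\psi \wedge \omega^{n-1}$, whence $\|\psi\|_{L^1} \le \|\psi\|_{L^2} \le C\, d^{\beta/2^n}$, which is the required estimate.

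The main obstacle is the estimate on $|G|$: one must push the chain of integrations by parts through for bounded (not smooth) quasi-psh functions, track the signed current $dd^c\psi$ carefully so that Cauchy--Schwarz applies term by term, and verify that the auxiliary energy $\int d\varphi_1 \wedge d^c\varphi_1 \wedge \omega_{\varphi_i} \wedge R$ remains bounded uniformly over $\mu \in \Pc_\alpha$. Each individual ingredient is standard in pluripotential theory, but the combinatorial bookkeeping of the recursion is where the $2^{-n}$ loss is incurred and must be tracked with care.
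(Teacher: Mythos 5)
Your proof is correct and follows essentially the same route as the paper: energy estimate for $d\psi\wedge d^c\psi\wedge\omega^{n-1}$ with the $2^{1-n}$ loss, then Poincar\'e, then the interpolation $\dist_\beta\lesssim\dist^\beta$. The only difference is that you re-derive the two cited black boxes — the paper invokes B{\l}ocki's stability theorem for the iterated Cauchy–Schwarz estimate and Dinh–Sibony/Triebel for the distance interpolation, whereas you prove both from scratch (correctly).
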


\begin{proof} 
By \cite[Th. 1.2]{Blocki_stability}, we have
\begin{align} \label{ine_ddbar}
\int_X d(\varphi_1 - \varphi_2) \wedge d^c (\varphi_1 - \varphi_2) \wedge \omega^{n-1} \le C\big( \int_X  (\varphi_1 - \varphi_2)(\omega^n_{\varphi_2} - \omega^n_{\varphi_1}) \big)^{2^{1-n}},
\end{align}
for some constant $C$ independent of $\mu_1,\mu_2.$  
 Now using Poincar\'e's inequality (see \cite[Th. 1, page 275]{Evans}) for $L^2$-norm and the fact that $\varphi_1, \varphi_2$ are H\"older continuous with H\"older exponent $\beta$ and a fixed H\"older constant, we get 
\begin{align} \label{ine_L1alpha}
 \| \varphi_1 - \varphi_2\|_{L^1(X)} \lesssim \big( \int_X  (\varphi_1 - \varphi_2)(\omega^n_{\varphi_2} - \omega^n_{\varphi_1}) \big)^{2^{-n}} \lesssim  \dist_\beta(\mu_1,\mu_2)^{2^{-n}},
 \end{align}
where 
$$\dist_\beta(\mu_1,\mu_2):=  \sup_{\|v\|_{\Cc^{\beta}} \le 1} \big| \langle \mu_1-\mu_2, v \rangle \big|.$$
Recall from \cite{DinhSibony_Pk_superpotential,Triebel} that $\dist_\beta(\mu_1, \mu_2)  \lesssim \dist^{\beta}(\mu_1, \mu_2)$ for $\beta \in [0,1].$ This together with (\ref{ine_L1alpha}) gives (\ref{ine_chuanL1}). The proof is finished.
\end{proof}

The following result is the interpolation inequality for H\"older norms of which we include a proof for the readers' convenience. 

\begin{lemma} \label{le_danhgiachuanholder} Let $f$ be a H\"older continuous function in $\Cc^\beta(X)$ for some positive constant $\beta \in (0,1).$ Let $\epsilon$ be a positive number in $[0,1-\beta].$ Then we have
$$\| f\|_{\Cc^{\beta}} \le C  \|f\|_{\Cc^0}^{\frac{\epsilon}{\beta+ \epsilon}} \|f\|_{\Cc^{\beta+\epsilon}}^{\frac{\beta}{\beta+ \epsilon}},$$
for some constant $C$ depending only on $X.$
\end{lemma}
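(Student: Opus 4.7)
The plan is to prove this by the standard distance-splitting trick for the Hölder seminorm, then combine with the obvious estimate on the $\Cc^0$ part. Write $\|f\|_{\Cc^\beta}=\|f\|_{\Cc^0}+[f]_\beta$, where $[f]_\beta=\sup_{x\neq y}|f(x)-f(y)|/d(x,y)^\beta$, and similarly for $\beta+\epsilon$. The $\Cc^0$ part is harmless: since $\|f\|_{\Cc^0}\le \|f\|_{\Cc^{\beta+\epsilon}}$, we get
$$\|f\|_{\Cc^0}=\|f\|_{\Cc^0}^{\frac{\epsilon}{\beta+\epsilon}}\|f\|_{\Cc^0}^{\frac{\beta}{\beta+\epsilon}}\le \|f\|_{\Cc^0}^{\frac{\epsilon}{\beta+\epsilon}}\|f\|_{\Cc^{\beta+\epsilon}}^{\frac{\beta}{\beta+\epsilon}},$$
which is precisely of the required form, so only the seminorm needs real work.

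For the seminorm, fix $x\neq y$ in $X$ and a threshold $r>0$ to be chosen. If $d(x,y)\le r$, apply the $\beta+\epsilon$ estimate:
$$\frac{|f(x)-f(y)|}{d(x,y)^\beta}\le [f]_{\beta+\epsilon}\, d(x,y)^\epsilon\le [f]_{\beta+\epsilon}\, r^\epsilon.$$
If $d(x,y)\ge r$, use the trivial bound
$$\frac{|f(x)-f(y)|}{d(x,y)^\beta}\le \frac{2\|f\|_{\Cc^0}}{r^\beta}.$$
Taking $\sup$ over $x,y$ gives $[f]_\beta\le [f]_{\beta+\epsilon}r^\epsilon+2\|f\|_{\Cc^0}r^{-\beta}$, and optimizing by choosing $r=(\|f\|_{\Cc^0}/[f]_{\beta+\epsilon})^{1/(\beta+\epsilon)}$ balances the two terms and yields
$$[f]_\beta\le C\,\|f\|_{\Cc^0}^{\frac{\epsilon}{\beta+\epsilon}}\,[f]_{\beta+\epsilon}^{\frac{\beta}{\beta+\epsilon}}\le C\,\|f\|_{\Cc^0}^{\frac{\epsilon}{\beta+\epsilon}}\,\|f\|_{\Cc^{\beta+\epsilon}}^{\frac{\beta}{\beta+\epsilon}}.$$

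The only mild subtlety is ensuring the chosen $r$ is admissible: if $[f]_{\beta+\epsilon}=0$ then $f$ is constant and the statement is trivial, while if the optimal $r$ exceeds $\mathrm{diam}(X)$ then only the first regime occurs and the bound is even easier. Adding the $\Cc^0$ estimate to the seminorm estimate yields the claimed inequality with $C$ depending only on $\mathrm{diam}(X)$, i.e.\ only on $X$. There is no real obstacle here; the argument is purely metric and uses nothing about the Kähler structure, so it goes through verbatim on the compact Riemannian manifold $(X,\omega)$.
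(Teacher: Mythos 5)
Your proof is correct, but it follows a different route from the paper's. The paper does not split into distance regimes at all: it uses the pointwise algebraic identity
$$\frac{|f(x)-f(y)|}{|x-y|^{\beta}}=|f(x)-f(y)|^{\frac{\epsilon}{\beta+\epsilon}}\Bigl(\frac{|f(x)-f(y)|}{|x-y|^{\beta+\epsilon}}\Bigr)^{\frac{\beta}{\beta+\epsilon}},$$
valid because the exponents of $|f(x)-f(y)|$ sum to $1$ and the exponent of $|x-y|$ comes out to $-\beta$, and then bounds the two factors by $(2\|f\|_{\Cc^0})^{\epsilon/(\beta+\epsilon)}$ and $\|f\|_{\Cc^{\beta+\epsilon}}^{\beta/(\beta+\epsilon)}$ respectively (after reducing to $\C^n$ via a fixed atlas). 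This gives the seminorm bound in one line with no threshold, no optimization, and no case analysis. Your argument is the classical threshold-splitting proof: it requires choosing $r$, checking that the optimal $r$ is admissible, and treating the degenerate cases $[f]_{\beta+\epsilon}=0$ and $r>\mathrm{diam}(X)$ separately --- all of which you do correctly, and your handling of the $\Cc^0$ part of the norm is also fine. What your version buys is robustness: the splitting-and-optimizing scheme extends to interpolation between norms that do not factor pointwise, whereas the paper's identity is special to H\"older seminorms. For this particular lemma the paper's computation is the shorter path, but both arguments are complete and yield a constant depending only on $X$.
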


\begin{proof} Recall that the $\Cc^\beta$-norm is defined in $X$ by using a fixed cover of $X$ by  local charts. Thus without loss of generality, we can 	assume that $X= \C^n.$ For $x, y \in \C^n,$ we have 
\begin{align*}
\frac{|f(x)- f(y)|}{|x-y|^{\beta}}= |f(x)- f(y)|^{\frac{\epsilon}{\beta+ \epsilon}} \big(  \frac{|f(x)- f(y)|}{|x-y|^{\beta+\epsilon}}  \big)^{\frac{\beta}{\beta+ \epsilon}}\le  2 \|f\|_{\Cc^0}^{\frac{\epsilon}{\beta+ \epsilon}} \|f\|_{\Cc^{\beta+\epsilon}}^{\frac{\beta}{\beta+ \epsilon}}.
\end{align*}
The proof is finished.
\end{proof}

\begin{proof}[End of the proof of Theorem  \ref{th_stability}] Let $\mu_1, \mu_2 \in \Pc_\alpha$ ($\mu_1 \not = \mu_2$) and $\varphi_1, \varphi_2$ H\"older continuous solutions of  (\ref{eq_MA}) for $\mu_1,\mu_2$ respectively.  Fix a constant  $\beta \in (0, \frac{2\alpha}{n+1})$ and $\delta \in [0, \frac{2 \alpha}{n+1}-\beta).$  By Lemma \ref{le_uniformbound} and the definition of $\Pc_\alpha,$ there is a positive constant $\tilde{C}$ independent of $\varphi_1, \varphi_2$ such that $\varphi_1,\varphi_2$ are H\"older continuous with H\"older exponent $(\beta+\delta)$ and H\"older constant $\tilde{C}.$   Set 
$$N(\varphi_1, \varphi_2):= \max\{ \| \varphi_1 - \varphi_2\|^{\alpha}_{L^1(X)},  \| \varphi_1 - \varphi_2\|_{L^1(X)} \} \not = 0.$$
Fix   a real number $\tilde{\delta}$ in $(0,1).$ In order to prove (\ref{ine_holderphimu}), it suffices to suppose from now on that $\dist(\mu_1,\mu_2)$ is small. As it will be clear later, we will need that  $\dist(\mu_1,\mu_2)$ is less than a  positive constant depending on $\tilde{\delta}$ but independent of $\mu_1, \mu_2.$  By  Lemma \ref{le_uocluongL1},  the quantity $N(\varphi_1, \varphi_2)$ is also small.   In what follows, we use the notations $\lesssim$ and $\gtrsim$ to indicate $\le$ and $\ge$ respectively up to a multiplicative constant independent of $\mu_1, \mu_2.$ 

Let $\epsilon$ be a positive real number in $(0,1)$ to be chosen later.  Put $E_\epsilon:= \{\varphi_1 + \epsilon < \varphi_2 \}.$ On $E_\epsilon$ we have $\varphi_1 - \varphi_2 \le -\epsilon <0,$ hence $|\varphi_1 - \varphi_2| \ge  \epsilon.$ It follows that 
\begin{align} \label{ine_Eepsilonmu1}
\int_{E_\epsilon} d \mu_1 \le  \epsilon^{-1} \int_X |\varphi_1 - \varphi_2| d\mu_1 \lesssim \epsilon^{-1} N(\varphi_1, \varphi_2) 
\end{align}
by (\ref{le_ine_trunhau}). 
Since $|\varphi_2| \le \tilde{C},$ for any $\omega$-psh function $\varphi$ on $X$ such that $0 \le \varphi \le 1,$ we have 
$$E:= \{\varphi_1 + (\tilde{C}+2)\epsilon < \epsilon \varphi + (1- \epsilon) \varphi_2 \} \subset \{\varphi_1 +  (\tilde{C}+2)\epsilon < \epsilon+ \varphi_2+ \epsilon \tilde{C}\}= E_{\epsilon}.$$
This combined with the comparison principle gives
$$\int_{E} \omega_{\epsilon \varphi + (1- \epsilon) \varphi_2}^n \le \int_E \omega_{\varphi_1}^n \le \int_{E_{\epsilon}} \omega_{\varphi_1}^n= \int_{E_{\epsilon}} d\mu_1.$$
On the other hand, we also have $E_{2\epsilon(\tilde{C}+1)} \subset E$ and $\omega_{\epsilon \varphi + (1- \epsilon) \varphi_2}^n \ge \epsilon^n \omega^n_\varphi.$ This yields
$$ \epsilon^n \int_{E_{2\epsilon(\tilde{C}+1)}} \omega_\varphi^n \le \int_{E} \omega_{\epsilon \varphi + (1- \epsilon) \varphi_2}^n \le  \int_{E_{\epsilon }} d\mu_1.$$
Combining the last inequality with (\ref{ine_Eepsilonmu1}), we obtain
$$\epsilon^n \int_{E_{2\epsilon(\tilde{C}+1)}} \omega_\varphi^n \lesssim \epsilon^{-1}N(\varphi_1, \varphi_2).$$
Taking the supremum over every $\varphi$ in the last inequality implies
\begin{align} \label{ine_boundcap}
 \capK_\omega\big(E_{2\epsilon(\tilde{C}+1)}\big) \lesssim \epsilon^{-n-1} N(\varphi_1, \varphi_2).
\end{align}  
 Choose $\epsilon:= C_{\tilde{\delta}} \big(N(\varphi_1, \varphi_2)\big)^{1/(n+1+ \tilde{\delta})} \in (0,1),$ where $C_{\tilde{\delta}}>1$ is a constant big enough (depending on $\tilde{\delta}$) which is independent of $\varphi_1, \varphi_2$.
 Here recall that $N(\varphi_1, \varphi_2)$ was assumed to be small enough at the beginning of the proof.  

We claim that $E_{2\epsilon(\tilde{C}+2)}$ is empty.  Suppose the contrary. Thus applying Lemma \ref{le_cap} to $s:= -2(\tilde{C}+2) \epsilon$ shows that  $ \capK_\omega\big(E_{2\epsilon(\tilde{C}+1)}\big) \ge A_{\tilde{\delta}} \epsilon^{\tilde{\delta}}$ for some constant $A_{\tilde{\delta}}$ independent of $\varphi_1, \varphi_2.$   This coupled with (\ref{ine_boundcap})  gives 
\begin{align} \label{ine_boundNphi}
N(\varphi_1, \varphi_2) \gtrsim A_{\tilde{\delta}}  \epsilon^{n+1+ \tilde{\delta}}= A_{\tilde{\delta}} C_{\tilde{\delta}}^{n+1+ \tilde{\delta}}  N(\varphi_1, \varphi_2). 
\end{align}  
We get a contradiction because  $C_{\tilde{\delta}}$ can be chosen such that  $A_{\tilde{\delta}} C_{\tilde{\delta}}>1$.  Therefore  $E_{2\epsilon(\tilde{C}+2)}$ is empty. In other words, we have 
$$\varphi_1 - \varphi_2 \gtrsim -   \big(N(\varphi_1, \varphi_2)\big)^{1/(n+1+ \tilde{\delta})}.$$
By swapping the roles of $\varphi_1, \varphi_2$ we also get 
$$\varphi_2 - \varphi_1 \gtrsim  - \big(N(\varphi_1, \varphi_2)\big)^{1/(n+1+ \tilde{\delta})}.$$
This implies that 
\begin{align} \label{ine_boundLvocung}
\|\varphi_1 - \varphi_2\|_{L^{\infty}(X)} \lesssim  \big(N(\varphi_1, \varphi_2)\big)^{1/(n+1+ \tilde{\delta})} \lesssim  \| \varphi_1 - \varphi_2\|_{L^1(X)}^{\alpha/(n+1+ \tilde{\delta})}
\end{align}
which is 
$$\lesssim \dist(\mu_1, \mu_2)^{\alpha \beta 2^{-n}/(n+1+ \tilde{\delta})}.$$
Now applying Lemma \ref{le_danhgiachuanholder} to $f= \varphi_1-\varphi_2$ and using (\ref{ine_boundLvocung}) we obtain that  for any $\delta \in [0, \frac{2 \alpha}{n+1}-\beta),$ 
$$\|\varphi_1- \varphi_1\|_{\Cc^{\beta}} \lesssim  \|\varphi_1 - \varphi_2\|_{L^{\infty}(X)}^{\frac{\delta}{\beta+\delta}} \lesssim  \dist(\mu_1, \mu_2)^{\delta \alpha 2^{-n}(n+1+ \tilde{\delta})^{-1} \frac{\beta}{\beta+\delta}}.$$
Letting $\delta \rightarrow (\frac{2 \alpha}{n+1}-\beta)$ and $\tilde{\delta} \rightarrow 0$  gives the desired result.  The proof is finished.
\end{proof}

\begin{proposition} \label{pro_uniformrealsub} Let $M,\{Y_{\tau}\}_{\tau \in M}$ and $\Psi$ be as in Introduction. Then the superpotential of $\mu_\tau$ is H\"older continuous with uniformly bounded H\"older exponent and H\"older constant as $\tau$ varies in $M.$
\end{proposition}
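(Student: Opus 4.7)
The strategy is to revisit the proof in \cite{Vu_MA} establishing the H\"older continuity of the superpotential of the pushed-forward volume measure on a fixed compact piece of a generic submanifold, and to verify that every constant appearing in that argument depends only on a finite list of geometric data of the embedding $\Psi_\tau$ which remains uniformly bounded as $\tau$ varies. Concretely, the conclusion of Proposition \ref{pro_uniformrealsub} is equivalent, via \cite[Le.~3.3]{DinhVietanhMongeampere} (see (\ref{le_ine_trunhau})), to an inequality of the form
\begin{align*}
\int_K |\varphi_1 \circ \Psi_\tau - \varphi_2 \circ \Psi_\tau|\, d\vol_Y \le C\, \max\big\{\|\varphi_1-\varphi_2\|_{L^1(X)}^\alpha,\ \|\varphi_1-\varphi_2\|_{L^1(X)}\big\}
\end{align*}
for all $\varphi_1,\varphi_2 \in \mathcal{P}_0$, with $C,\alpha$ independent of $\tau \in M$. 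So the task reduces to producing such a uniform estimate.

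The plan is to localize the problem. Cover $K \subset Y$ by finitely many coordinate charts $(U_i,\psi_i)$ of $Y$, and cover $X$ by finitely many coordinate charts $(V_j,\chi_j)$. Using the smoothness of $\Psi$ and the compactness of $M$, refine these covers and choose a finite partition of unity subordinate to a cover of $M$ by small open sets $W_\ell$ so that for each $\ell$ and each $\tau \in W_\ell$ the set $\Psi_\tau(U_i)$ is contained in a single $V_{j(i,\ell)}$. In these charts, the embedding $\Psi_\tau$ and all its derivatives up to some fixed order are uniformly bounded in $\tau$ because $\Psi$ is smooth and $M$ is compact. Moreover, the CR-genericness condition, which is open and satisfied everywhere on $Y \times M$, translates into a nondegeneracy bound on a certain real-analytic quantity built from the tangent map $d\Psi_\tau$ at points of $Y$; by compactness of $K \times M$ this nondegeneracy is bounded below uniformly in $\tau$.

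With the uniform local setup in hand, I would reread the proof of \cite{Vu_MA} and check, step by step, that the H\"older estimate there is obtained by combining: (i) standard estimates for integrals of $\omega$-psh functions along complex slices transverse to $Y_\tau$, where the transverse directions are chosen using the CR-generic condition; (ii) a Fubini/slicing argument over the remaining real tangent directions of $Y_\tau$; and (iii) Lipschitz control of the change of variables. Each of these produces a constant that depends only on the $C^k$-norms of $\Psi_\tau$ up to a fixed order $k$, on the lower bound for the genericness, and on $\vol(K)$. All three are uniform in $\tau \in M$ by the previous paragraph, so the resulting H\"older exponent and H\"older constant are independent of $\tau$.

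The main obstacle, and essentially the only one, is bookkeeping: one must state the local estimate of \cite{Vu_MA} in a sufficiently quantitative form, expressing its constants as explicit continuous functions of the data $(d\Psi_\tau, d^2\Psi_\tau, \dots, \text{genericness lower bound})$, so that the compactness of $M$ can then be invoked at the end. No new analytic idea seems necessary; once the dependence of constants is made explicit in \cite{Vu_MA}, the uniformity statement follows by a compactness argument on $M$.
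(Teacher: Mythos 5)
Your proposal is correct and follows essentially the same route as the paper: reduce the statement to the uniform validity of inequality (\ref{le_ine_trunhau}) for $\mu_\tau$, localize via finitely many charts, and use the smoothness of $\Psi$ together with the compactness of $M$ (and of $K$) to make every constant in the argument of \cite{Vu_MA} --- the $\Cc^3$-bounds on the charts, the genericness lower bound, and the number of charts needed --- uniform in $\tau$. The paper simply makes this bookkeeping slightly more concrete by stating three explicit uniform properties of the local chart around each point of $Y_\tau$, but the idea is identical.
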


\proof As already mentioned, the desired result can be deduced directly from  \cite{Vu_MA}. We briefly explain it here for the reader's convenience. We need to prove (\ref{le_ine_trunhau}) for $\mu_\tau$ instead of $\mu$ and the constants $C, \alpha$ there must be independent of $\tau.$ 

Since the problem is local, it is enough to work locally. Each $Y_\tau$ inherits the metric from $Y.$ Fix $\tau \in M$ and a point $a \in Y_\tau.$ The crucial point is that the data in \cite[Le. 3.1]{Vu_MA} can be chosen uniformly in $\tau, a.$ To be precise,  there exists a local chart $(W, \Phi)$ around $a$ in $X$ with $\Phi: W \rightarrow \C^n$  such that the following three properties holds:

$(i)$ $W \cap K_\tau$ contains a ball $\B_{Y_\tau}(a, r)$ of radius $r_a$ centered at $a$ of $Y_\tau,$ where  $r>0$ is a constant independent of $a, \tau,$

$(ii)$ $\|\Phi\|_{\cali{C}^3}$ and $\|\Phi^{-1}\|_{\cali{C}^3}$ are bounded by a constant independent of $a,\tau,$

$(iii)$ $\Phi(W\cap K)$ is the graph over the unit ball  $\B$ of $\R^n$ of a smooth map $h: \overline{\B} \rightarrow \R^n$ ($\C^n \approx \R^n +i \R^n$) such that $\|h\|_{\cali{C}^3}$ is bounded by a constant independent of $a, \tau$ and $D^j h(0)=0$ for $j=0,1,2.$
 
 By Property $(i),$ the number of local charts  $(W, \Phi)$   needed to cover $K_\tau$ can be chosen to be a fixed number for every $\tau.$ On a such local chart, every constant in  \cite[Pro. 3.7]{Vu_MA} can be chosen to be the same for every $\tau, a.$ Now the rest of the proof is done as in \cite{Vu_MA}. This gives us  constants $C,\alpha$ in (\ref{le_ine_trunhau}) independent of $\tau.$  The proof is finished.
\endproof

\bibliography{biblio_family_MA}
\bibliographystyle{siam}

\end{document}